%
\documentclass[runningheads]{llncs}
\usepackage{graphicx}
%

\setcounter{secnumdepth}{3}

%

\usepackage{url}

\usepackage{hyperref}

\usepackage{amssymb,amsmath, amsrefs, amsfonts}
 

\newcommand{\Z}{{\textsf{\textup{Z}}}}



\newtheorem{thm}{Theorem}

\newtheorem{cor}[thm]{Corollary}
\newtheorem{defi}[thm]{Definition}
\newtheorem{rem}[thm]{Remark}
\newtheorem{nota}[thm]{Notation}

\newtheorem{princ}[thm]{Principle}

\newtheorem{ack}[thm]{Acknowledgement}

\newcommand\be{\begin{equation}}
\newcommand\ee{\end{equation}} 

\usepackage{amsmath,amsfonts}

\def\bdefi{\begin{defi}\rm}
\def\edefi{\end{defi}}
\def\bnota{\begin{nota}\rm}
\def\enota{\end{nota}}

\def\FIVE{\Pi_{1}^{1}\text{-\textup{\textsf{CA}}}_{0}}

\def\SIXK{\Pi_{k}^{1}\text{-\textsf{\textup{CA}}}_{0}^{\omega}}

\def\ZFC{\textup{\textsf{ZFC}}}
\def\ZF{\textup{\textsf{ZF}}}

\def\L{\textsf{\textup{L}}}

\def\({\textup{(}}
\def\){\textup{)}}

\def\RCAo{\textup{\textsf{RCA}}_{0}^{\omega}}
\def\ACAo{\textup{\textsf{ACA}}_{0}^{\omega}}

\def\bye{\end{document}}
\def\N{{\mathbb  N}}
\def\Q{{\mathbb  Q}}
\def\R{{\mathbb  R}}

\def\SS{\textup{\textsf{S}}}

\def\di{\rightarrow}

\def\asa{\leftrightarrow}

\def\ACA{\textup{\textsf{ACA}}}

\def\INDY{\textup{\textsf{IND}}_{0}}



\def\cocode{\textup{\textsf{cocode}}}

\def\NIN{\textup{\textsf{NIN}}}

\def\NBI{\textup{\textsf{NBI}}}

\def\eps{\varepsilon}








%
\usepackage{comment}

\begin{document}
\title{On the computational properties of the uncountability of the real numbers\thanks{This research was supported by the \emph{Deutsche Forschungsgemeinschaft} (DFG) via the grant \emph{Reverse Mathematics beyond the G\"odel hierarchy} (SA3418/1-1).}}
%
%
\author{Sam Sanders\inst{1}}
\authorrunning{S.\ Sanders}
%
\institute{Department of Philosophy II, RUB Bochum, Germany \\
\email{sasander@me.com} \\
\url{https://sasander.wixsite.com/academic}}

\setcounter{secnumdepth}{3}
\setcounter{tocdepth}{3}

%
\maketitle              
\begin{abstract}
The uncountability of the real numbers is one of their most basic properties, known (far) outside of mathematics. 
Cantor's 1874 proof of the uncountability of the real numbers even appears in the very first paper on set theory, i.e.\ a historical milestone. Despite this famous status and history, the computational properties of the uncountability of the real numbers have not been studied much. In this paper, we study the following computational operation that witnesses that the real numbers not countable:
\[
\text{\emph{on input a countable set of reals, output a real not in that set.}}
\]
In particular, we formulate a considerable number of operations that are computationally equivalent to the centred operation, working in Kleene's higher-order computability theory based on his S1-S9 computation schemes. Perhaps surprisingly, our equivalent operations involve most basic properties of the \emph{Riemann integral} and Volterra's early work circa 1881. 
\end{abstract}
\section{Introduction}\label{intro}
\subsection{Motivation and overview}
Like Hilbert (\cite{hilbertendlich}), we believe the infinite to be a central object of study in mathematics. 
That the infinite comes in `different sizes' is a relatively new insight, due to Cantor around 1874 (\cite{cantor1}), in the guise of the \emph{uncountability of $\R$}, also known simply as \emph{Cantor's theorem}. 
Cantor's paper \cite{cantor1} is heralded as the first paper in set theory, complete with its own Wikipedia page \cite{wica}.

\smallskip

Despite the aforementioned famous status, the computational properties of the uncountability of $\R$ have not been studied much.  
In this paper, we study the following computational operation witnessing that $\R$ is not countable:
\begin{center}
\text{\emph{on input a countable set $A\subset \mathbb{R}$, output $y\in \mathbb{R}\setminus A$.}}
\end{center}
In particular, we show that this operation is computationally equivalent to the following ones.  Note that a \emph{regulated} function has left and right limits everywhere, as studied by Bourbaki for Riemann integration (see Section \ref{cdef}).
\begin{enumerate}
\renewcommand{\theenumi}{(\roman{enumi})}
\item On input regulated $f:[0,1]\di \R$, output a point $x\in [0,1]$ where $f$ is continuous (or quasi-continuous or lower semi-continuous or Darboux).
\item On input regulated $f:[0,1]\di [0,1]$ with Riemann integral $\int_{0}^{1}f(x)dx=0$, output $x\in [0,1]$ with $f(x)=0$ (Bourbaki, \cite{boereng}*{p.\ 61, Cor.\ 1}).
\item (Volterra) On input regulated $f,g:[0,1]\di \R$, output a real $x\in [0,1]$ such that $f$ and $g$ are both continuous or both discontinuous at $x$. \label{volkert1}
\item (Volterra) On input regulated $f:[0,1]\di \R$, output either $q\in \Q\cap [0,1]$ where $f$ is discontinuous, or output $x\in [0,1]\setminus \Q$ where $f$ is continuous.\label{volkert2}
\item On input regulated $f:[0,1]\di \R$, output $y\in (0,1)$ where the function $F(x):=\lambda x.\int_{0}^{x}f(t)dt$ is differentiable with derivative equal to $f(y)$.
\item On input regulated $f:[0,1]\di \R$, output $a, b\in  [0,1]$ such that the set $\{ x\in [0,1]:f(a)\leq f(x)\leq f(b)\}$ is infinite.\
\end{enumerate}
A full list may be found in Theorem \ref{flunk}.  
We work in Kleene's higher-order computability theory, based on his S1-S9 computation schemes (see Section \ref{prelim}).  
Like in \cite{dagsamXIII}, a much weaker notion of computability should suffice, namely based on a fragment of G\"odel's $T$.
We note that by \cite{dagsamXII}*{Theorem 18}, the operations in the above list are \emph{hard\footnote{The functional $\SS_{k}^{2}$ from Section \ref{lll} can decide $\Pi_{k}^{1}$-formulas, but the centred operation is not computable in $\SS_{k}^{2}$ (or their union).  Kleene's $\exists^{3}$ from Section \ref{lll} computes the centred operation, but the former also yields full second-order arithmetic.\label{klank}} to compute} relative to the usual hierarchy based on comprehension.  
An explanation for this phenomenon is also in Section \ref{prelim}.

\smallskip

Some of the above operations, including items \ref{volkert1} and \ref{volkert2} in the list, stem from Volterra's early work (1881) in the spirit of -but predating- the Baire category theorem, as discussed in Section \ref{vintro}.  

\smallskip

It should not be a surprise that the above equivalences crucially hinge on our definition of `countable set', as discussed in detail in Remark \ref{diunk}.  In a nutshell, the set theoretic definition of `countable set' (based on injections or bijections to $\N$) is not suitable for studying regulated functions; our alternative definition (based on unions over $\N$ of finite sets) is closer to the way countable sets occur `in the wild', at least in the context of regulated functions. 

\smallskip

Finally, this paper is a spin-off from my project with Dag Normann (University of Oslo) on the logical and computational properties of the uncountable.
The interested reader may consult \cite{dagsamIII,dagsamX, dagsamXIII} for further details on our project.

\subsection{Volterra's early work and related results}\label{vintro}
We introduce Volterra's early work from \cite{volaarde2} as it pertains to this paper. 

\smallskip

First of all, the Riemann integral was groundbreaking for a number of reasons, including its ability to integrate functions with infinitely many points of discontinuity, as shown by Riemann himself (\cite{riehabi}). 
A natural question is then `how discontinuous' a Riemann integrable function can be.  In this context, Thomae introduced $T:\R\di\R$ around 1875 in \cite{thomeke}*{p.\ 14, \S20}):
\be\label{thomae}
T(x):=
\begin{cases} 
0 & \textup{if } x\in \R\setminus\Q\\
\frac{1}{q} & \textup{if $x=\frac{p}{q}$ and $p, q$ are co-prime} 
\end{cases}.
\ee
Thomae's function $T$ is integrable on any interval, but has a dense set of points of discontinuity, namely $\Q$, and a dense set of points of continuity, namely $\R\setminus \Q$. 

\smallskip

The perceptive student, upon seeing Thomae's function as in \eqref{thomae}, will ask for a function continuous at each rational point and discontinuous at each irrational one.
Such a function cannot exist, as is generally proved using the Baire category theorem.  
However, Volterra in \cite{volaarde2} already established this negative result about twenty years before the publication of the Baire category theorem.

\smallskip

Secondly, as to the content of Volterra's paper \cite{volaarde2}, we find the following theorem on the first page, where a function is \emph{pointwise discontinuous} if it has a dense set of continuity points.
\begin{thm}[Volterra, 1881]\label{VOL}
There do not exist pointwise discontinuous functions defined on an interval for which the continuity points of one are the discontinuity points of the other, and vice versa.
\end{thm}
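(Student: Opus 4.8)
The plan is to give Volterra's original nested-interval argument, which avoids the Baire category theorem entirely (this is exactly why it can predate it). Suppose toward a contradiction that $f,g$ are \emph{pointwise discontinuous} on an interval $I$ and that the continuity points of each are precisely the discontinuity points of the other. Writing $C_{f},C_{g}$ for the continuity sets and $D_{f},D_{g}$ for the discontinuity sets, the hypothesis reads $C_{f}=D_{g}$ and $C_{g}=D_{f}$. First I would record two consequences: both $C_{f}$ and $C_{g}$ are dense in $I$ (this is the meaning of pointwise discontinuity), while $C_{f}\cap C_{g}\subseteq C_{f}\cap D_{f}=\emptyset$, so the two continuity sets are disjoint. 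The whole difficulty is to manufacture a single point lying in both $C_{f}$ and $C_{g}$, contradicting this disjointness.

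Next I would build a decreasing sequence of closed intervals $J_{1}\supseteq J_{2}\supseteq\cdots$, each contained in the interior of its predecessor, with lengths tending to $0$, in which I alternately control the oscillation of $f$ and of $g$. Recall that for a function $h$ the oscillation on a set $J$ is $\mathrm{osc}_{h}(J)=\sup_{u,v\in J}|h(u)-h(v)|$, and $h$ is continuous at $x$ precisely when the oscillation on shrinking neighbourhoods of $x$ tends to $0$. At an odd stage $2k-1$, using the density of $C_{f}$ I pick a continuity point of $f$ inside the open interior of the current interval; continuity of $f$ there furnishes a closed subinterval on which $\mathrm{osc}_{f}<1/k$, and I shrink it further so that its length is $<1/(2k-1)$ and it sits strictly inside the previous interval. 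At an even stage $2k$ I do the same with $g$ and $C_{g}$, obtaining $\mathrm{osc}_{g}(J_{2k})<1/k$. Density guarantees the required continuity point exists at every stage, so the construction never stalls.

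Finally, completeness of $\R$ (the nested interval property) yields a single point $x^{*}\in\bigcap_{n}J_{n}$. Since $x^{*}$ lies in $J_{2k-1}$ for every $k$, the oscillation of $f$ on arbitrarily small neighbourhoods of $x^{*}$ is below $1/k$ for all $k$, whence $x^{*}\in C_{f}$; the even stages give $x^{*}\in C_{g}$ in the same way. This contradicts $C_{f}\cap C_{g}=\emptyset$, proving the theorem. I expect the only delicate point to be the bookkeeping in the construction, namely simultaneously arranging strict nesting, lengths shrinking to $0$, and the two interleaved oscillation bounds; but each individual step is forced by density together with the oscillation characterisation of continuity, so no genuinely hard estimate is involved.
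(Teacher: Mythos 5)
Your proof is correct. Note that the paper itself gives no proof of this theorem: it is stated as historical background and attributed to Volterra's 1881 paper, with the remark that Volterra obtained it some twenty years before the Baire category theorem. Your nested-interval argument — alternately shrinking closed intervals using the density of the two continuity sets and the oscillation characterisation of continuity, then extracting a common point by completeness — is precisely the Baire-category-free argument the paper credits to Volterra, so you have in effect supplied the missing classical proof rather than deviated from the paper. One small point of care: in the final step it is not enough that $x^{*}\in J_{2k-1}$; you need a full neighbourhood of $x^{*}$ inside $J_{2k-1}$ to bound the oscillation \emph{at} $x^{*}$, but this is exactly what your requirement that each interval lie in the interior of its predecessor delivers (since $x^{*}\in J_{2k}\subseteq \mathrm{int}\, J_{2k-1}$), so the argument goes through as written.
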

Volterra then states two corollaries, of which the following is perhaps well-known in `popular mathematics' and constitutes the aforementioned negative result. 
\begin{cor}[Volterra, 1881]\label{VOLcor}
There is no $\R\di\R$ function that is continuous on $\Q$ and discontinuous on $\R\setminus\Q$. 
\end{cor}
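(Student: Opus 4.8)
The plan is to derive Corollary \ref{VOLcor} directly from Theorem \ref{VOL} by exhibiting a suitable pointwise discontinuous function whose continuity structure is incompatible with the hypothetical function. First I would recall Thomae's function $T$ from \eqref{thomae}: it is continuous exactly on $\R\setminus\Q$ and discontinuous exactly on $\Q$, and in particular it is \emph{pointwise discontinuous} since its continuity points $\R\setminus\Q$ form a dense set. Thus $T$ is a concrete witness of a pointwise discontinuous function with a prescribed, already dense, set of continuity points.

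Next I would argue by contradiction. Suppose $f:\R\di\R$ were continuous on $\Q$ and discontinuous on $\R\setminus\Q$. Then the continuity points of $f$ are exactly $\Q$, which is dense in any interval, so $f$ is itself pointwise discontinuous. Now I would observe that $f$ and $T$ stand in precisely the forbidden relationship: the continuity points of $f$ (namely $\Q$) are exactly the discontinuity points of $T$, and the discontinuity points of $f$ (namely $\R\setminus\Q$) are exactly the continuity points of $T$. Hence $f$ and $T$ would be a pair of pointwise discontinuous functions on an interval for which the continuity points of one are the discontinuity points of the other and vice versa, contradicting Theorem \ref{VOL}.

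The main obstacle, such as it is, lies in verifying that $T$ is genuinely continuous at every irrational and discontinuous at every rational, so that the matching of continuity and discontinuity sets is exact rather than merely one of containment. The standard argument is that for irrational $x$, given $\eps>0$, only finitely many rationals in a neighbourhood have denominator below $1/\eps$, so $T$ is small off a finite set and continuity follows; for rational $x=p/q$, the density of the irrationals (where $T=0$) against the value $1/q>0$ forces discontinuity. I would also note that the hypothesised $f$ might a priori have \emph{additional} continuity points, but the statement only posits continuity \emph{on} $\Q$ and discontinuity \emph{on} $\R\setminus\Q$, which pins down its continuity set as exactly $\Q$; this is what makes the pairing with $T$ exact and lets Theorem \ref{VOL} apply. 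No delicate estimates beyond these routine facts about $T$ are required, so the proof is short once the witness $T$ is in hand.
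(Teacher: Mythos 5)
Your proof is correct and takes essentially the route the paper intends: the corollary is stated as a consequence of Theorem \ref{VOL}, with Thomae's function \eqref{thomae} introduced immediately beforehand precisely so it can serve as the complementary witness, which is exactly how you use it. The only point to make explicit is the routine restriction of $f$ and $T$ to an interval (say $[0,1]$) so that Theorem \ref{VOL}, which concerns functions on an interval, applies verbatim.
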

Thirdly, we shall study Volterra's theorem and corollary restricted to regulated functions (see Section \ref{cdef}). 
The latter kind of functions are automatically `pointwise discontinuous' in the sense of Volterra.

\smallskip

Fourth,  Volterra's results from \cite{volaarde2} are generalised in \cite{volterraplus,gaud}.  
The following theorem is immediate from these generalisations. 
\begin{thm}\label{dorki}
For any countable dense set $D\subset [0,1]$ and $f:[0,1]\di \R$, either $f$ is discontinuous at some point in $D$ or continuous at some point in $\R\setminus D$. 
\end{thm}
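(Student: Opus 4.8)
The plan is to prove the statement in contrapositive form: I will negate the displayed alternative and derive a contradiction. Concretely, suppose for some $f:[0,1]\di\R$ that $f$ is continuous at every point of $D$ and discontinuous at every point of $[0,1]\setminus D$; I must show this situation is impossible. The structural fact I would build on is that the continuity set $C(f):=\{x\in[0,1]: f \text{ is continuous at } x\}$ is always a $G_{\delta}$ set, since $C(f)=\bigcap_{n}U_{n}$ where $U_{n}:=\{x:\omega_{f}(x)<1/n\}$ is open and $\omega_{f}$ denotes the oscillation of $f$. Under the assumption we have $D\subseteq C(f)$ and $C(f)\cap([0,1]\setminus D)=\emptyset$, hence $C(f)=D$, so the countable dense set $D$ would itself be $G_{\delta}$.

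First I would rule this out by a Baire category argument. Writing $D=\bigcap_{n}U_{n}$, each $U_{n}\supseteq D$ is open and, because $D$ is dense, also dense; hence each complement $[0,1]\setminus U_{n}$ is closed and nowhere dense, so $[0,1]\setminus D=\bigcup_{n}([0,1]\setminus U_{n})$ is meagre. Since $D$ is countable and $[0,1]$ has no isolated points, $D=\bigcup_{k}\{d_{k}\}$ is also meagre, whence $[0,1]$ is meagre in itself, contradicting the Baire category theorem. This yields the claim.

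Alternatively, and more in keeping with the surrounding material, I would deduce the result directly from Volterra's Theorem \ref{VOL}. Under the same assumption I would exhibit a companion function whose continuity and discontinuity sets are exactly interchanged with those of $f$: enumerating $D=\{d_{k}\}_{k\in\N}$, set $g(x):=\sum_{d_{k}\le x}2^{-k}$, a bounded monotone jump function that is discontinuous precisely at the points of $D$ and continuous precisely on $[0,1]\setminus D$. Then $f$ and $g$ are both pointwise discontinuous (each has a dense set of continuity points), and the continuity points of $f$ are exactly the discontinuity points of $g$ and vice versa, which is forbidden by Theorem \ref{VOL}. Taking $D=\Q\cap[0,1]$ shows that this route also recovers Corollary \ref{VOLcor} as a special case.

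I expect the main obstacle to lie in the verification lemmas rather than in the overall strategy. In the first route the crux is establishing that $C(f)$ is $G_{\delta}$ through the oscillation function, together with the remark that density of $D$ forces each $U_{n}$ to be dense; in the second route the crux is checking that the jump function $g$ is discontinuous at every point of $D$ and nowhere else. Neither step is deep, which matches the paper's observation that Theorem \ref{dorki} is immediate from the generalisations in \cite{volterraplus,gaud}; the only real subtlety, should one wish to track it, is which fragment of the Baire category theorem (equivalently, which comprehension principle) the chosen argument actually consumes.
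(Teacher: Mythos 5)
Both of your arguments are correct, and it is worth noting that the paper itself contains no proof of Theorem \ref{dorki}: it merely remarks that the statement is immediate from the generalisations of Volterra's work in \cite{volterraplus,gaud}, so any complete argument is necessarily your own. Your first route is the standard modern one and is fully self-contained: negating the disjunction forces $C(f)=D$ exactly (continuity on all of $D$, discontinuity on all of $[0,1]\setminus D$), the oscillation argument shows $C(f)$ is $G_{\delta}$, and the Baire category theorem rules out a countable dense $G_{\delta}$ in $[0,1]$, since then both $D$ and $[0,1]\setminus D$ would be meagre. This route in fact proves the full strength of the results the paper cites, namely that \emph{no} function can have a countable dense set as its precise continuity set. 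Your second route is closer in spirit to the paper's narrative: reducing the general case to Volterra's Theorem \ref{VOL} via the jump function $g(x)=\sum_{d_{k}\leq x}2^{-k}$, whose jump at $x$ is $\sum_{d_{k}=x}2^{-k}$ and whose discontinuity set is therefore exactly $D$, with pointwise discontinuity of both $f$ and $g$ supplied by the density of $D$ and of $[0,1]\setminus D$ (the latter because $D$ is countable). The one caveat is that Theorem \ref{VOL} is itself stated in the paper without proof, so this route is conditional on it, whereas the Baire route proves everything from scratch; your closing observation about which fragment of the Baire category theorem the argument consumes is also well taken, given that the paper's Theorem \ref{kifkif} explicitly ties realisers of the Baire category theorem to strong Cantor realisers.
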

%
%
In conclusion, the perceptive reader has already noted that most equivalent principles have a `Baire category theorem' flavour to them.  
In particular, the intuitive notion of `small or negligible set' can be formalised in at least two (fairly independent) ways, namely based on `measure' and `density/meagre'.  
The equivalences in this paper do not (and we believe cannot) involve the former, but heavily depend on the latter. 

\subsection{Preliminaries and definitions}
We briefly introduce Kleene's \emph{higher-order computability theory} in Section \ref{prelim}.
We introduce some essential axioms (Section \ref{lll}) and definitions (Section~\ref{cdef}).  A full introduction may be found in e.g.\ \cite{dagsamX}*{\S2}.
Since Kleene's computability theory borrows heavily from type theory, we shall often use common notations from the latter; for instance, the natural numbers are type $0$ objects, denoted $n^{0}$ or $n\in \N$.  
Similarly, elements of Baire space are type $1$ objects, denoted $f\in \N^{\N}$ or $f^{1}$.  An overview of this kind of notations is in Section \ref{appendisch}. 

\subsubsection{Kleene's computability theory}\label{prelim}
Our main results are in computability theory and we make our notion of `computability' precise as follows.  
\begin{enumerate}
\item[(I)] We adopt $\ZFC$, i.e.\ Zermelo-Fraenkel set theory with the Axiom of Choice, as the official metatheory for all results, unless explicitly stated otherwise.
\item[(II)] We adopt Kleene's notion of \emph{higher-order computation} as given by his nine clauses S1-S9 (see \cite{longmann}*{Ch.\ 5} or \cite{kleeneS1S9}) as our official notion of `computable'.
\end{enumerate}
We mention that S1-S8 are rather basic and merely introduce a kind of higher-order primitive recursion with higher-order parameters. 
The real power comes from S9, which essentially hard-codes the \emph{recursion theorem} for S1-S9-computability in an ad hoc way.  
By contrast, the recursion theorem for Turing machines is derived from first principles in \cite{zweer}.

\smallskip

On a historical note, it is part of the folklore of computability theory that many have tried (and failed) to formulate models of computation for objects of all finite types in which one derives the recursion theorem in a natural way.  For this reason, Kleene ultimately introduced S1-S9, which 
were initially criticised for their ad hoc nature, but eventually received general acceptance nonetheless.  
On a related note, an equivalent-but-more-elegant $\lambda$-calculus formulation of S1-S9-computability based on fixed point operators may be found in \cite{dagsamXIII}; an additional advantage 
of the framework from \cite{dagsamXIII} is that it accommodates partial objects.

\smallskip

We refer to \cite{longmann, dagsamXIII} for a (more) thorough overview of higher-order computability theory.
We do mention the distinction between `normal' and `non-normal' functionals  based on the following definition from \cite{longmann}*{\S5.4}. 
We only make use of $\exists^{n}$ for $n=2,3$, as defined in Section \ref{lll}.
\bdefi\label{norma}
For $n\geq 2$, a functional of type $n$ is called \emph{normal} if it computes Kleene's $\exists^{n}$ following S1-S9, and \emph{non-normal} otherwise.  
\edefi
\noindent
It is a historical fact that higher-order computability theory, based on Kleene's S1-S9 schemes, has focused primarily on the world of \emph{normal} functionals; this opinion can be found \cite{longmann}*{\S5.4}.  
Nonetheless, we have previously studied the computational properties of new \emph{non-normal} functionals, namely those that compute the objects claimed to exist by:
\begin{itemize}
\item covering theorems due to Heine-Borel, Vitali, and Lindel\"of (\cites{dagsam, dagsamII, dagsamVI}),
\item the Baire category theorem (\cite{dagsamVII}),
\item local-global principles like \emph{Pincherle's theorem} (\cite{dagsamV}),
\item weak fragments of the Axiom of (countable) Choice (\cite{dagsamIX}),
\item the uncountability of $\R$ and the Bolzano-Weierstrass theorem for countable sets in Cantor space (\cites{dagsamX, dagsamXI}),
\item the Jordan decomposition theorem and related results (\cite{dagsamXII, dagsamXIII}).
\end{itemize}
In this paper, we greatly extend the study of the uncountability of $\R$ mentioned in the fifth item; the operations sketched in Section \ref{intro} are all non-normal, in that they do not even compute Kleene's $\exists^{2}$ from Section \ref{lll}.

\subsubsection{Some comprehension functionals}\label{lll}
In Turing-style computability theory, computational hardness is measured in terms of where the oracle set fits in the well-known comprehension hierarchy.  
For this reason, we introduce some axioms and functionals related to \emph{higher-order comprehension} in this section.
We are mostly dealing with \emph{conventional} comprehension here, i.e.\ only parameters over $\N$ and $\N^{\N}$ are allowed in formula classes like $\Pi_{k}^{1}$ and $\Sigma_{k}^{1}$.  

\smallskip

First of all, the functional $\varphi^{2}$, also called \emph{Kleene's quantifier $\exists^{2}$}, as in $(\exists^{2})$ is clearly discontinuous at $f=11\dots$; in fact, $\exists^{2}$ is (computationally) equivalent to the existence of $F:\R\di\R$ such that $F(x)=1$ if $x>_{\R}0$, and $0$ otherwise via Grilliot's trick (see \cite{kohlenbach2}*{\S3}).
\be\label{muk}\tag{$\exists^{2}$}
(\exists \varphi^{2}\leq_{2}1)(\forall f^{1})\big[(\exists n)(f(n)=0) \asa \varphi(f)=0    \big]. 
\ee
Related to $(\exists^{2})$, the functional $\mu^{2}$ in $(\mu^{2})$ is called \emph{Feferman's $\mu$} (\cite{avi2}).
\begin{align}\label{mu}\tag{$\mu^{2}$}
(\exists \mu^{2})(\forall f^{1})\big(\big[ (\exists n)(f(n)=0) \di [f(\mu(f))=0&\wedge (\forall i<\mu(f))(f(i)\ne 0) \big]\\
& \wedge [ (\forall n)(f(n)\ne0)\di   \mu(f)=0] \big). \notag
\end{align}
We have $(\exists^{2})\asa (\mu^{2})$ over Kohlenbach's base theory (\cite{kohlenbach2}), while $\exists^{2}$ and $\mu^{2}$ are also computationally equivalent.  

\smallskip

Secondly, the functional $\SS^{2}$ in $(\SS^{2})$ is called \emph{the Suslin functional} (\cite{kohlenbach2}).
\be\tag{$\SS^{2}$}
(\exists\SS^{2}\leq_{2}1)(\forall f^{1})\big[  (\exists g^{1})(\forall n^{0})(f(\overline{g}n)=0)\asa \SS(f)=0  \big].
\ee
By definition, the Suslin functional $\SS^{2}$ can decide whether a $\Sigma_{1}^{1}$-formula as in the left-hand side of $(\SS^{2})$ is true or false.   
We similarly define the functional $\SS_{k}^{2}$ which decides the truth or falsity of $\Sigma_{k}^{1}$-formulas from $\L_{2}$.
%
We note that the operators $\nu_{n}$ from \cite{boekskeopendoen}*{p.\ 129} are essentially $\SS_{n}^{2}$ strengthened to return a witness (if existant) to the $\Sigma_{n}^{1}$-formula at hand.  

\smallskip

\noindent
Thirdly, the functional $E^{3}$ clearly computes $\exists^{2}$ and $\SS_{k}^{2}$ for any $k\in \N$:
\be\tag{$\exists^{3}$}
(\exists E^{3}\leq_{3}1)(\forall Y^{2})\big[  (\exists f^{1})(Y(f)=0)\asa E(Y)=0  \big].
\ee
The functional from $(\exists^{3})$ is also called \emph{Kleene's quantifier $\exists^{3}$}, and we use the same -by now obvious- convention for other functionals.  

\smallskip

In conclusion, the operations sketched in Section \ref{intro} are computable in $\exists^{3}$ but not in any $\SS_{k}^{2}$, as noted in Footnote \ref{klank} and which immediately follows from \cite{dagsamXII}*{Theorem 18}.  
Many non-normal functionals exhibit the same `computational hardness' and we merely view this as support for the development of a separate scale for classifying non-normal functionals.    

\subsubsection{Some definitions}\label{cdef}
We introduce some definitions needed in the below, mostly stemming from mainstream mathematics.

\smallskip

First of all, we shall study the following notions of weak continuity, going back (at least) to the days of Baire and Volterra.  
\bdefi\label{flung} For $f:[0,1]\di \R$, we have the following definitions:
\begin{itemize}
\item $f$ is \emph{upper semi-continuous} at $x_{0}\in [0,1]$ if $f(x_{0})\geq_{\R}\lim\sup_{x\di x_{0}} f(x)$,
\item $f$ is \emph{lower semi-continuous} at $x_{0}\in [0,1]$ if $f(x_{0})\leq_{\R}\lim\inf_{x\di x_{0}} f(x)$,
\item $f$ is \emph{quasi-continuous} at $x_{0}\in [0, 1]$ if for $ \epsilon > 0$ and an open neighbourhood $U$ of $x_{0}$, 
there is a non-empty open ${ G\subset U}$ with $(\forall x\in G) (|f(x_{0})-f(x)|<\eps)$.
\item $f$ is \emph{regulated} if for every $x_{0}$ in the domain, the `left' and `right' limit $f(x_{0}-)=\lim_{x\di x_{0}-}f(x)$ and $f(x_{0}+)=\lim_{x\di x_{0}+}f(x)$ exist.  
\end{itemize}
\edefi
Scheeffer studies discontinuous regulated functions in \cite{scheeffer} arround 1884 (without using the term `regulated'), while Bourbaki develops Riemann integration based on regulated functions in \cite{boerbakies}.  
An interesting observation about regular functions and continuity is as follows.
\begin{rem}[Continuity and regulatedness]\label{atleast}\rm
First of all, as discussed in \cite{kohlenbach2}*{\S3}, the \emph{local} equivalence for functions on Baire space between sequential and `epsilon-delta' continuity cannot be proved in $\ZF$.  
By \cite{dagsamXI}*{Theorem 3.32}, this equivalence for \emph{regulated} functions is provable in $\ZF$ (and much weaker systems).  

\smallskip

Secondly, $\mu^{2}$ readily computes the left and right limits of regulated $f:[0,1]\di \R$.  In this way, the formula `$f$ is continuous at $x\in [0,1]$' is decidable using $\mu^{2}$, namely equivalent to the formula `$f(x+)=f(x)=f(x-)$'.  
The usual `epsilon-delta' definition of continuity involves quantifiers over $\R$, i.e.\ the previous equality is much simpler and more elementary. 
\end{rem}
Secondly, we also need the notion of `intermediate value property', also called the `Darboux property'.  
\bdefi[Darboux property] Let $f:[0,1]\di \R$ be given. 
\begin{itemize}
\item A real $y\in \R$ is a left (resp.\ right) \emph{cluster value} of $f$ at $x\in [0,1]$ if there is $(x_{n})_{n\in \N}$ such that $y=\lim_{n\di \infty} f(x_{n})$ and $x=\lim_{n\di \infty}x_{n}$ and $(\forall n\in \N)(x_{n}\leq x)$ (resp.\ $(\forall n\in \N)(x_{n}\geq x)$).  
\item A point $x\in [0,1]$ is a \emph{Darboux point} of $f:[0,1]\di \R$ if for any $\delta>0$ and any left (resp.\ right) cluster value $y$ of $f$ at $x$ and $z\in \R$ strictly between $y$ and $f(x)$, there is $w\in (x-\delta, x)$ (resp.\ $w\in ( x, x+\delta)$) such that $f(w)=y$.   
\end{itemize}
\edefi
By definition, a point of continuity is also a Darboux point, but not vice versa.

\smallskip

Thirdly, as suggested in Section \ref{intro}, the set-theoretic definition of countable set is not suitable for the study of regulated functions (in computability theory).  
We present our alternative notion in Definition~\ref{hoogzalieleven}, which amounts to `unions over $\N$ of finite sets'.
We first provide some motivation in Remark~\ref{diunk}.
We assume subsets of $\R$ are given by their characteristic functions.  
\begin{rem}[Countable sets by any other name]\label{diunk}\rm
First of all, we have previously investigated the computational properties of the Jordan decomposition theorem in \cites{dagsamXII, dagsamXIII}.  
In the latter, a central object of study is as follows:
\begin{center}
\emph{a functional $\mathcal{E}$ that on input $A\subset [0,1]$ \textbf{and} $Y:[0,1]\di \N$ such that $Y$ is injective \(or bijective\) on $A$, outputs a sequence of reals listing all reals in $A$}.
\end{center}
We stress that the functional $\mathcal{E}$ crucially has \textbf{two} inputs: a countable set $A\subset [0,1]$ \textbf{and} a function $Y:[0,1]\di \N$ witnessing that $A$ is countable.  
Indeed, omitting the second input from $\mathcal{E}$ yields a choice function well-known from the Axiom of Choice.  
However, such functions lack any and all computational content (even in very weak instances: see \cite{dagsamIX}).  
Thus, the functional $\mathcal{E}$ is not a meaningful object of study in computability theory \emph{unless} we include the second input $Y$.  

\smallskip

%
\noindent
Secondly, consider the following sets definable via $\exists^{2}$, for regulated $f:[0,1]\di \R$:
\begin{eqnarray}\label{lagel2}\textstyle
&A:= \big\{x\in (0,1):  f(x+)\ne f(x) \vee f(x-)\ne f(x)\big\}, \notag \\
&A_{n}:=\big\{x\in (0,1): |f(x+)- f(x)|>\frac1{2^{n}} \vee |f(x-)- f(x)|>\frac1{2^{n}}\big\}.
\end{eqnarray}
Clearly, $A=\cup_{n\in\N}A_{n}$ collects all points in $(0,1)$ where $f$ is discontinuous; this set is central to many proofs involving regulated functions (see e.g.\ \cite{voordedorst}*{Thm.\ 0.36}).  
Now, that $A_{n}$ is finite follows by a standard\footnote{If $A_{n}$ were infinite, the Bolzano-Weierstrass theorem implies the existence of a limit point $y\in [0,1]$ for $A_{n}$.  One readily shows that $f(y+)$ or $f(y-)$ does not exist, a contradiction as $f$ is assumed to be regulated.\label{fkluk}} compactness argument.  
However, while $A$ is then countable, we are unable to construct an injection from $A$ to $\N$ (let alone a bijection), even assuming $\SS_{m}^{2}$.
Similarly, for a set $A\subset \R$ without limit points, $A\cap [-n,n]$ is finite for any $n\in \N$, yet we are again unable to construct an injection from $A$ to $\N$, even assuming $\SS_{m}^{2}$.

\smallskip

In conclusion, the countable set $A$ in \eqref{lagel2} is central to many proofs involving regulated functions, but it seems we cannot (in general) construct an injection from $A$ to $\N$.  
However, this injection is a crucial input of the functional $\mathcal{E}$ as omitting it gives rise to a choice function from the Axiom of Choice.  
%
%
\end{rem}
Besides pointing out a problem with the usual definition of countable set, Remark~\ref{diunk} also suggests an alternative, more suitable, definition of countable set, namely `union over $\N$ of finite sets'.  
We now introduce the `usual' definitions of countable set (Definitions \ref{eni} and \ref{standard}), and our alternative one (Definition \ref{hoogzalieleven}).  
\bdefi[Enumerable sets of reals]\label{eni}
A set $A\subset \R$ is \emph{enumerable} if there exists a sequence $(x_{n})_{n\in \N}$ such that $(\forall x\in \R)(x\in A\di (\exists n\in \N)(x=_{\R}x_{n}))$.  
\edefi
This definition reflects the RM-notion of `countable set' from \cite{simpson2}*{V.4.2}.  
We note that given $\mu^{2}$ from Section \ref{lll}, we may replace the final implication in Definition \ref{eni} by an equivalence. 
One sometimes finds `denumerable' in the literature to describe Definition \ref{eni}.
\bdefi[Countable subset of $\R$]\label{standard}~
A set $A\subset \R$ is \emph{countable} if there exists $Y:\R\di \N$ such that $(\forall x, y\in A)(Y(x)=_{0}Y(y)\di x=_{\R}y)$. 
If $Y:\R\di \N$ is also \emph{surjective}, i.e.\ $(\forall n\in \N)(\exists x\in A)(Y(x)=n)$, we call $A$ \emph{strongly countable}.
\edefi
The first part of Definition \ref{standard} is from Kunen's set theory textbook (\cite{kunen}*{p.~63}) and the second part is taken from Hrbacek-Jech's set theory textbook \cite{hrbacekjech} (where the term `countable' is used instead of `strongly countable').  
For the rest of this paper, `strongly countable' and `countable' shall exclusively refer to Definition~\ref{standard}, \emph{except when explicitly stated otherwise}. 
\begin{defi}\label{hoogzalieleven}
A set $A\subset \R$ is \emph{weakly countable} if there is a \emph{height} $H:\R\di \N$ for $A$, i.e.\ for all $n\in \N$, $A_{n}:= \{ x\in A: H(x)<n\}$ is finite.  
\end{defi}
We note that the notion of `height' is mentioned in e.g.\ \cite{demol}*{p.\ 33} and \cite{vadsiger}, from whence we took this notion.   The following remark is crucial for the below.
\begin{rem}[Inputs and heights]\label{inhe}\rm
Similar to the functional $\mathcal{E}$ in Remark \ref{diunk}, a functional defined on weakly countable sets always takes \textbf{two} inputs: the set $A\subset \R$ \textbf{and} the height $H:\R\di \N$.  
We note that given a sequence of finite sets $(X_{n})_{n\in \N}$ in $\R$, a height can be defined as $H(x):= (\mu n)(x\in X_{n})$ using Feferman's $\mu^{2}$. 
A set is therefore weakly countable iff it is the union over $\N$ of finite sets.  In particular, the set $A=\cup_{n\in \N}A_{n}$ as defined in \eqref{lagel2} has a height $H_{A}(x):= (\mu n)(x\in A_{n})$, 
which is crucial for the results in Section \ref{birf}.  
\end{rem}

\section{Main results}\label{main}
We establish the results sketched in Section \ref{intro}.
We generally assume $\exists^{2}$ from Section \ref{prelim} to avoid the technical details involved in the representation of sets and real numbers.  
Given the hardness of the operations from Section \ref{intro}, as noted in Footnote \ref{klank} and Section \ref{lll}, this seems like a weak assumption.

\subsection{Introduction}
The uncountability of $\R$ can be studied in numerous guises in higher-order computability theory.  
For instance, the following notions are from \cite{dagsamX, dagsamXII}, where it is also shown that many extremely basic operations compute these realisers.  
\bdefi[Realisers for the uncountability of $\R$]\label{kefi}~
\begin{itemize}
\item A \emph{Cantor functional/realiser} takes as input $A\subset [0,1]$ and $Y:[0,1]\di \N$ such that $Y$ is injective on $A$, and outputs $x\not \in A$.  
\item A \emph{\textbf{weak} Cantor realiser} takes as input $A\subset [0,1]$ and $Y:[0,1]\di \N$ such that $Y$ is \textbf{bijective} on $A$, and outputs $x\not \in A$.  
\item A $\NIN$-\emph{realiser} takes as input $Y:[0,1]\di \N$ and outputs $x,y\in [0,1]$ with $x\ne y \wedge Y(x)=Y(y)$.  
\end{itemize}
\edefi
Clearly, the realisers in Definition \ref{kefi} are based on the `set theoretic' definition of countability.  As discussed in Remark \ref{diunk}, the notion of weak countability (Definition \ref{hoogzalieleven}) seems to be have better, suggesting 
the following stronger notion. 
\begin{defi}[Strong Cantor realiser]
A \emph{strong Cantor realiser} is any functional that on input a weakly countable $A\subset [0,1]$, outputs $y\in [0,1]\setminus A$. 
\end{defi}
Following Remark \ref{inhe}, a strong Cantor realiser takes as input a countale set \textbf{and} its height as in Definition \ref{hoogzalieleven}.
Modulo $\exists^{2}$, this amounts to an input consisting of a sequence $(X_{n})_{n\in \N}$ of finite sets in $[0,1]$ and an output $y\in [0,1]\setminus \cup_{n\in \N}X_{n}$.
The reader will observe that a strong Cantor realiser computes a normal one.  Any (weak, strong, or normal) Cantor realiser witnesses the extremely basic idea that removing a countable set from an uncountable one, there is at least one element left.  
Nonetheless, the functionals $\SS_{k}^{2}$ from Section \ref{lll}, which decide $\Pi_{k}^{1}$-formulas, cannot compute a weak Cantor realiser (\cite{dagsamXII}*{Theorem 18}).

\smallskip

We stress that (strong, weak, or normal) Cantor realisers are \emph{partial} objects: they provide the required output $y\in [0,1]\setminus A$ when the inputs are given as in their respective specification.  
If the inputs are not according to the specification, the output may be undefined or may be some real number.   As noted in Section \ref{prelim}, there is an equivalent formulation of Kleene's S1-S9 schemes that accommodates partial objects. 

\smallskip

We show in Section \ref{birf} that many operations on regulated functions are computationally equivalent to strong Cantor realisers.  
This provides additional justification for the introduction of our new notion of weak countability. 

\subsection{Computational equivalences for strong Cantor realisers}\label{birf}
In this section, we show that many operations on regulated functions are computationally equivalent to strong Cantor realisers.  
We recall that subsets of $\R$ are given by characteristic functions.    

\smallskip

First of all, various natural functionals compute a strong Cantor realiser.
\begin{thm}\label{kifkif}
Given $\exists^{2}$, these functionals compute a strong Cantor realiser:
\begin{itemize}
\item the structure functional $\Omega$ \(see \cite{dagsamXIII}\) which on input finite $X\subset \R$ outputs a finite sequence $\Omega(X)=(x_{0}, \dots, x_{k})$ which includes all elements of $X$.
\item the non-monotone induction functional \(see \cite{dagsamVII, dagcomp20}\),
\item a realiser for the Baire category theorem \(see \cite{dagsamVII}*{\S6}\).
\end{itemize}
\end{thm}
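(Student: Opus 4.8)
The plan is to reduce, in each case, to the single combinatorial fact that (modulo $\exists^{2}$) a weakly countable set is exactly a union of finite sets. Given a height $H:\R\di\N$ for $A\subset[0,1]$, the sets $A_{n}:=\{x\in A: H(x)<n\}$ are finite, and their characteristic functions are computable from those of $A$ and $H$ using $\exists^{2}$, since $H(x)<n$ is decidable; moreover $A=\cup_{n}A_{n}$. Thus a strong Cantor realiser must, on this data, produce some $y\in[0,1]\setminus\cup_{n}A_{n}$. The difficulty, as stressed in Remark \ref{diunk}, is that $\exists^{2}$ alone does not enumerate the $A_{n}$; each of the three functionals supplies exactly this missing ingredient, either by producing an enumeration (for diagonalisation) or by bypassing enumeration topologically.

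For the structure functional $\Omega$, I would feed each finite set $A_{n}$ (as a characteristic function) to $\Omega$, obtaining a finite sequence $\Omega(A_{n})$ listing all elements of $A_{n}$. Concatenating these listings over $n\in\N$ yields a single sequence $(z_{m})_{m\in\N}$ of reals with $\{z_{m}: m\in\N\}=\cup_{n}A_{n}=A$. It then remains to diagonalise: using $\mu^{2}$ (available from $\exists^{2}$) to compute expansions of reals via interval-halving, I would define $y\in[0,1]$ whose $m$-th digit differs from the $m$-th digit of $z_{m}$, so that $y\ne z_{m}$ for every $m$ and hence $y\notin A$. To avoid the familiar ambiguity of dyadic representations I would work in a base $\geq 3$ and only ever use interior digits (e.g.\ setting the $m$-th quaternary digit of $y$ to $1$ or $2$), guaranteeing $y\ne z_{m}$ as reals rather than mere digitwise difference. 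This settles the first item.

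For the non-monotone induction functional I would show it computes $\Omega$ given $\exists^{2}$, reducing to the previous paragraph: one iteratively extracts elements of a finite set $X$ given by a characteristic function, the non-monotonicity being harmless since the process terminates once all of the finitely many elements of $X$ have been listed. The third item is handled by a direct Baire category argument: each finite $A_{n}$ is nowhere dense, so $[0,1]\setminus A_{n}$ is open and dense, and its characteristic function is again computable from $A$, $H$ and $\exists^{2}$. Feeding the sequence $([0,1]\setminus A_{n})_{n\in\N}$ of dense open sets to a realiser for the Baire category theorem returns a point in $\bigcap_{n}([0,1]\setminus A_{n})=[0,1]\setminus\cup_{n}A_{n}=[0,1]\setminus A$, which is the required output.

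The main obstacle I anticipate is matching our presentation of the finite sets to the input format each functional expects. For $\Omega$ this is immediate, but for the Baire category realiser of \cite{dagsamVII}*{\S6} one must verify that a dense open set may be presented by a (possibly discontinuous) characteristic function rather than as an RM-open union of basic intervals; otherwise one would first have to recover the `gaps' of $A_{n}$, which circles back to the very enumeration problem that $\Omega$ solves. By contrast, the diagonalisation step and the verification that non-monotone induction terminates on finite inputs are routine once $\exists^{2}$ is available.
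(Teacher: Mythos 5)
Your treatment of the first and third items is essentially the paper's own proof. For $\Omega$: feed the finite sets $A_{n}$ to $\Omega$, concatenate the resulting finite sequences, and diagonalise against the resulting enumeration of $A$ (the paper simply cites \cite{simpson2}*{II.4.9} and \cite{grayk} for the diagonalisation step, which your base-four digit construction makes explicit). For the Baire category realiser: the complements $[0,1]\setminus A_{n}$ are dense and open, their characteristic functions are computable from the data via $\exists^{2}$, and any point of the intersection lies outside $A$; this is exactly the paper's argument. Your worry about the input format of the Baire realiser also resolves in the right direction: in \cite{dagsamVII}, open sets are given by (possibly discontinuous) characteristic functions, which is precisely what you can supply here.

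The genuine gap is the second item. You propose to show that non-monotone induction computes $\Omega$ and call this step routine, but no mechanism is given, and none is apparent: to extract even a single element of a finite set $X\subset [0,1]$ from its characteristic function, any bisection-style search would have to decide statements of the form ``$X\cap (p,q)\neq \emptyset$'', which quantify over all reals and are answered neither by $\exists^{2}$ nor by membership queries to $X$. Non-monotone induction only provides transfinite iteration of type-two operations on subsets of $\N$; at no stage does it acquire the ability to locate a real number hidden inside a characteristic function. This needle-in-a-haystack extraction is exactly what makes $\Omega$ (like the strong Cantor realiser itself) hard, e.g.\ not computable from any $\SS_{k}^{2}$, so your claimed reduction is at least as strong as the theorem being proved, and it is not established in the literature cited by the paper---the paper certainly does not claim it. The paper instead routes the second item through the third: non-monotone induction computes a realiser for the Baire category theorem by \cite{dagsamVII}*{Theorem 6.5}, after which the argument you already gave for the third item applies verbatim. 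Replacing your reduction to $\Omega$ by this citation repairs your proof with no other changes.
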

\begin{proof}
Let $(X_{n})_{n\in \N}$ be a sequence of finite sets in $[0,1]$.  
Since $\Omega$ can enumerate finite sets, the sequence $\Omega(X_{0})*\Omega(X_{1})*\dots$ enumerates $\cup_{n\in \N}X_{n}$.  
One readily computes $y\in [0,1]$ not in this sequence (see e.g.\ \cite{simpson2}*{II.4.9} or \cite{grayk}), yielding a strong Cantor realiser.  

\smallskip

The second item computes the third item by \cite{dagsamVII}*{Theorem 6.5}.  Regarding the third item, $O_{n}:= [0,1]\setminus \cup_{k\leq n}X_{k}$ is (trivially) dense and open.  
Clearly any $y \in \cap_{n\in \N}O_{n}$ by definition satisfies $y\in [0,1]\setminus \cup_{n\in \N}X_{n}$, i.e.\ a realiser for the Baire category theorem computes a strong Cantor realiser. 
\qed
\end{proof}
%

Secondly, we have the following theorem. 
Regarding item \ref{full6}, the property `$x$ is a local strict$^{\ref{balia}}$ maximum of $f$' does not seem to be decidable given $\exists^{2}$ for (general) regulated $f$, in contrast to Remark \ref{atleast}.
We also note that items \ref{full2} and \ref{full25} are based on Volterra's Theorem \ref{VOL} and Corollary \ref{VOLcor}, while item \ref{full26} is based on the associated generalisation as in Theorem \ref{dorki}, and sports a certain robustness. 
\begin{thm}\label{flunk}
The following are computationally equivalent modulo $\exists^{2}$.
\begin{enumerate}
\renewcommand{\theenumi}{\(\alph{enumi}\)}
\item A strong Cantor realiser.\label{full3}
\item A functional that on input regulated $f:[0,1]\di \R$, outputs $a, b\in  [0,1]$ such that $\{ x\in [0,1]:f(a)\leq f(x)\leq f(b)\}$ is infinite.\label{full0}
\item A functional that on input regulated $f:[0,1]\di \R$, outputs $y\in  [0,1]$ where $f$ is continuous.\label{full1}
\item A functional that on input regulated $f:[0,1]\di \R$, outputs $y\in  [0,1]$ where $f$ is {quasicontinuous}, or lower semi-continuous, or Darboux.\label{full12}
\item A `density' functional that on input regulated $f:[0,1]\di \R$, $x\in [0,1]$, and $k\in \N$, outputs $y\in  [0,1]\cap B(x, \frac{1}{2^{k}})$ such that $f$ is continuous at $y$. \label{full1.5}
\item A functional that on input regulated $f:[0,1]\di \R$, outputs either $q\in \Q\cap [0,1]$ where $f$ is discontinuous, or $x\in [0,1]\setminus \Q$ where $f$ is continuous. \label{full2}
\item A functional that on input regulated $f,g:[0,1]\di \R$, outputs a real $x\in [0,1]$ such that $f$ and $g$ are both continuous or both discontinuous at $x$. \label{full25}
\item A functional that on input regulated $f,g:[0,1]\di \R$ and weakly countable \(or countable, or strongly countable\) and dense $D\subset [0,1]$, outputs either $d\in D\cap [0,1]$ where $f$ is discontinuous, or $x\in [0,1]\setminus D$ where $f$ is continuous.\label{full26}
\item A functional that on input regulated $f:[0,1]\di [0,1]$ with Riemann integral $\int_{0}^{1}f(x)dx=0$, outputs $x\in [0,1]$ with $f(x)=0$ \(Bourbaki, \cite{boereng}*{p.\ 61, Cor.~1}\).\label{full4}
\item A functional that on input regulated $f:[0,1]\di \R$, outputs $y\in (0,1)$ where $F(x):=\lambda x.\int_{0}^{x}f(t)dt$ is differentiable with derivative equal to $f(y)$.\label{full5}
\item A functional that on input regulated $f:[0,1]\di \R$ with only removable discontinuities, outputs $x\in [0,1]$ which is \textbf{not} a strict\footnote{A point $x\in [0,1]$ is a strict local maximum of $f:[0,1]\di \R$ in case $(\exists N\in \N)( \forall y \in B(x, \frac{1}{2^{N}}))(y\ne x\di f(y)<f(x))$.\label{balia}} local maximum. \label{full6}
\end{enumerate}
\end{thm}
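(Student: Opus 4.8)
The plan is to prove Theorem~\ref{flunk} as a web of reductions anchored at the strong Cantor realiser~\ref{full3}, establishing for each remaining item both that~\ref{full3} computes it and that it computes~\ref{full3}, all modulo $\exists^{2}$. Two constructions carry the argument. The first is the content of Remarks~\ref{diunk} and~\ref{inhe}: for regulated $f$ the discontinuity set $D_{f}=\cup_{n}A_{n}$ of~\eqref{lagel2} is weakly countable and $\exists^{2}$ computes a height for it, so applying~\ref{full3} to $D_{f}$ returns a \emph{continuity point} of $f$; enlarging $D_{f}$ by a given weakly countable set beforehand lets the realiser avoid that set as well. The second is a \emph{generalised Thomae function} $f_{A}$: given finite sets $(X_{n})_{n\in\N}$ with union $A$, I assign each $p\in A$ a positive weight $w_{p}$ with $\sum_{p\in A}w_{p}<\infty$ (say total weight $2^{-n}$ shared among the points of height $n$) and put $f_{A}(p):=w_{p}$ and $f_{A}:=0$ off $A$. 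Summability gives $f_{A}(x-)=f_{A}(x+)=0$ everywhere, so $f_{A}$ is regulated with continuity set exactly $[0,1]\setminus A$, only removable discontinuities, $\int_{0}^{1}f_{A}=0$, and $F(x):=\int_{0}^{x}f_{A}\equiv 0$.

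For the forward reductions from~\ref{full3} I would, in each case, use $\exists^{2}$ and the first tool to extract a continuity point $y\notin D_{f}$ and then verify that continuity at $y$ already witnesses the weaker conclusion: it yields quasi-continuity, lower semi-continuity and the Darboux property~\ref{full12}; it makes $F$ differentiable at $y$ with $F'(y)=f(y)$~\ref{full5}; for $f\geq0$ with $\int_{0}^{1}f=0$ it forces $f(y)=0$~\ref{full4}; applied to $D_{f}\cup D_{g}$ it gives a common continuity point~\ref{full25}. For~\ref{full2} and~\ref{full26} I first adjoin $\Q\cap[0,1]$ to the set fed to the realiser (legitimate since $\exists^{2}$ decides rationality of a real), forcing an \emph{irrational} continuity point and so matching the stated dichotomy; item~\ref{full26} then follows from the pattern of Theorem~\ref{dorki}. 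Item~\ref{full1.5} reduces to~\ref{full1} by rescaling the input to $B(x,2^{-k})$, and~\ref{full0} follows from a short pigeonhole argument over the dense set of continuity points, producing $a,b$ whose level set is infinite.

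For the reverse reductions the backbone is $f_{A}$, whose continuity set is precisely $[0,1]\setminus A$: a continuity, quasi-continuity, lower semi-continuity or Darboux point of $f_{A}$ lies outside $A$~(\ref{full1},\ref{full12}); a zero of $f_{A}$ lies outside $A$ since $f_{A}(x)=0\iff x\notin A$~\ref{full4}; and since $F\equiv0$, any $y$ with $F'(y)=f_{A}(y)$ has $f_{A}(y)=0$, hence $y\notin A$~\ref{full5}. Pairing $f_{A}$ with $g\equiv0$ (everywhere continuous) makes any common-continuity point of~\ref{full25} lie outside $A$. For~\ref{full2} and~\ref{full26} I build $f_{A}$ with discontinuity set $A\setminus\Q$ (deleting rationals from each $X_{n}$ via $\exists^{2}$) and take $D=\Q\cap[0,1]$, so the `rational discontinuity' disjunct is vacuous and the output is an irrational point outside $A$. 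For~\ref{full0}, feeding $f_{A}$ to the functional gives $a,b$ with $\{x:f_{A}(a)\leq f_{A}(x)\leq f_{A}(b)\}$ infinite; since only finitely many points carry weight $\geq w_{a}$, an infinite level set forces $f_{A}(a)=0$, i.e.\ $a\notin A$. Finally~\ref{full6} follows once one checks that the strict local maxima of $f_{A}$ are exactly $A$ (each $p\in A$ beats all nearby points once the finitely many of weight $\geq w_{p}$ are excluded, while $f_{A}\geq0$ forbids strict maxima off $A$), so a non-strict local maximum lies outside $A$.

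I expect the main obstacle to be the \emph{forward} direction~\ref{full3}$\to$\ref{full6}. As already noted before the theorem, strict-local-maximality is not decidable from $\exists^{2}$ for general regulated $f$, so I cannot simply present the set of strict local maxima to the realiser with a computable height. The plan is to split this set into its discontinuity-type part, which is contained in the $\exists^{2}$-decidable jump set $\{x:f(x)>f(x\pm)\}\subseteq D_{f}$ and so carries a computable height, and its continuous-type part, and then to locate a continuity point that is \emph{not} a strict maximum without ever deciding maximality — e.g.\ by driving continuity points toward a value-supremum so that nearby larger values rule out a strict maximum. A secondary but pervasive obstacle is the computability bookkeeping: one must check that building the weights $w_{p}$ from a given height, extracting $D_{f}$ together with its height, and the rationality deletions are all genuinely S1-S9-computable in $\exists^{2}$.
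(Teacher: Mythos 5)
Your proposal follows, for almost every item, the same two constructions as the paper's own proof: your ``generalised Thomae function'' $f_{A}$ is essentially the paper's function $h$ from \eqref{modi3}, and your first tool (feeding the discontinuity set $\cup_{n}A_{n}$ of \eqref{lagel2}, whose height is $\exists^{2}$-computable, to the strong Cantor realiser) is exactly how the paper passes from item \ref{full3} to the continuity-based items. Some of your reverse routes are even cleaner than the paper's: pairing $f_{A}$ with $g\equiv 0$ for item \ref{full25} (the paper uses Thomae's function), and deleting the rationals from $A$ for items \ref{full2} and \ref{full26} (the paper argues via a case distinction and item \ref{full1}). One slip does need repair: ``total weight $2^{-n}$ shared among the points of height $n$'' is not computable, because a finite set of reals given only by its characteristic function cannot be counted or enumerated using $\exists^{2}$ --- that is precisely the power of the functional $\Omega$ in Theorem \ref{kifkif}. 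Assign instead the weight $2^{-n-1}$ to \emph{every} point of height $n$ (this is literally \eqref{modi3}); summability may then fail, but it was never needed: regulatedness only requires that finitely many points carry weight above any given threshold, which the height guarantees.

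The genuine gap is the direction from item \ref{full3} to item \ref{full6}, which you explicitly leave open. Your sketch --- ``driving continuity points toward a value-supremum so that nearby larger values rule out a strict maximum'' --- cannot work as stated: supremum-seeking procedures produce precisely the maximisers of the function, i.e.\ the prime candidates for strict local maxima (for $f(x)=-|x-\frac{1}{2}|$ such a procedure is drawn to $\frac{1}{2}$, the unique strict local maximum); and weakening the target to ``continuity points that are not even non-strict local maxima'' does not help either, since the non-strict local maxima of a continuous function can form an uncountable (merely meager) set, which a strong Cantor realiser has no means to avoid. The paper's solution is to pass to the \emph{continuous} function $\tilde{f}(x):=f(x+)$ (continuous because $f$ has only removable discontinuities): for continuous functions, ``$x$ is a strict local maximum'' \emph{is} decidable in $\exists^{2}$ via the maximum functional (Footnote \ref{roofer}), and each such maximum comes with an $\exists^{2}$-computable radius $N_{f,x}$; at discontinuity points of $f$, strict maximality of $f$ reduces to the decidable jump condition $f(x)>f(x\pm)$, again with computable radius. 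Since two strict local maxima with radius witness $n$ must lie at distance at least $2^{-n}$ apart, the maxima with witness $n$ form a finite set $A_{n}$, so their union is weakly countable with computable height and can be handed to the realiser. Your plan of splitting off the jump-type maxima is the first half of this argument; what it is missing is the passage to $\tilde{f}$, which is the device that makes the continuous-type maxima decidable, with moduli, and hence presentable to the realiser.
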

\begin{proof}
First of all, assume item \ref{full0} and let $X:=\cup_{n\in \N}X_{n}$ be the union of finite sets $X_{n}\subset [0,1]$ and define $h$ as follows:
\be\label{modi3}
h(x):=
\begin{cases}
0 & x\not \in X \\
\frac{1}{2^{n+1}} &  x\in X_{n} \textup{ and $n$ is the least such number}
\end{cases}.
\ee
Since $\cup_{k\leq n}X_{k}$ is finite for all $n\in \N$,  we observe that $h$ is regulated, in particular $0=h(0+)=h(1-)=h(x+)=h(x-)$ for any $x\in (0,1)$.
Now, in case $h(a)>0$, then $\{ x\in [0,1]:h(a)\leq h(x)\leq h(b)\}$ is finite by the definition of $h$.  In case $h(a)=0$, \eqref{modi3} implies $a\in [0,1]\setminus \cup_{n\in \N}X_{n}$, and item \ref{full3} follows.  

\smallskip

Secondly, assume item \ref{full1} and note that if $f:[0,1]\di \R$ is continuous at $y\in [0,1]$, we can use $\mu^{2}$ to find $N\in \N$ such that $(\forall q\in B(y, \frac{1}{2^{N}})\cap \Q\cap [0,1]  )(|f(y)-f(q)|<\frac{1}{2}) $; this readily 
yields $a, b\in [0,1]$ as required by item \ref{full0}.  We note that item \ref{full1.5} immediately yields item \ref{full1}, while the reversal readily follows by rescaling. 
Moreover, item \ref{full1} implies item \ref{full12}; 
applying the latter to $h$ as in \eqref{modi3}, we observe that any point $y\in [0,1]$ where $h$ is quasi- or lower semi-continuous or Darboux, is such that $y\in [0,1]\setminus \cup_{n\in \N}X_{n}$, i.e.\ item \ref{full3} follows.

\smallskip

Thirdly, assume item \ref{full2} and fix regulated $f:[0,1]\di \R$.  The following case distinction is decidable using $\exists^{2}$:
\begin{itemize}
\item there is $q\in \Q\cap [0,1]$ with $f(q+)=f(q)= f(q-)$, or
\item for all $q\in \Q\cap [0,1]$, we have $f(q+)\ne f(q) \vee f(q-)\ne f(q)$
\end{itemize}
In the first case, use $\mu^{2}$ to find this rational.  In the second case, the output of item \ref{full2} must be $x\in [0,1]\setminus \Q$ such that $f$ is continuous at $x$. 
In each case, we have a point of continuity for $f$, i.e.\ item \ref{full1} follows. 

\smallskip

\smallskip

Fourth, assume item \ref{full3} and fix regulated $f:[0,1]\di \R$.   Note that $\mu^{2}$ can find $q\in [0,1]\cap \Q$ with $f(q+)\ne f(q) \vee f(q-)\ne f(q)$, if such rational exists.   
In case $f$ is continuous at all $q\in [0,1]\cap \Q$, let $(q_{m})_{m\in \N}$ be an enumeration of $\Q\cap [0,1]$. 
Now consider the following set 
\be\label{sameold}\textstyle
X_{n}:=\big\{x\in (0,1): |f(x+)- f(x)|>\frac1{2^{n}} \vee |f(x-)- f(x)|>\frac1{2^{n}}\big\}, 
\ee
which is finite by Footnote \ref{fkluk}.  
Now use the strong Cantor realiser to find $y\in [0,1]\setminus \big(   \cup_{n\in \N} Y_{n}  \big)$, where $Y_{n}=X_{n}\cup \{q_{n}\}$.
Then $f$ must be continuous at $y \in [0,1]\setminus \Q$ and item \ref{full2} follows. 

\smallskip

Fifth, assume item \ref{full4} and let $h$ be as in \eqref{modi3}. 
Since $h$ is regulated, it is Riemann integrable, with $\int_{0}^{1}h(x)dx=0$.  Any $y\in [0,1]$ such that $h(y)=0$ is by definition not in $\cup_{n\in \N}X_{n}$, yielding a strong Cantor realiser as in item \ref{full3}. 
Item \ref{full1} readily yields item \ref{full4} as follows: \emph{if} regulated $f:[0,1]\di [0,1]$ has $\int_{0}^{1}f(x)dx=0$ and is continuous at $y\in [0,1]$, \emph{then} we must have $f(y)=0$;
indeed, in case $f(y)>0$, the continuity of $f$ at $y$ implies that $f(z)>\frac{1}{2^{k}}$ for $z\in B(y,\frac{1}{2^{k}} )$ for some $k\in \N$, implying $\int_{0}^{1}f(x)dx > 0$, since $f$ is non-negative on $[0,1]$.

\smallskip

Sixth, item \ref{full1} implies item \ref{full5} by the (second) fundamental theorem of calculus (see e.g.\ \cite{nudyrudy}*{Thm.\ 8.17, p.\ 165}). 
Item \ref{full5} applied to $h$ as in \eqref{modi3} yields item \ref{full3} as $\int_{0}^{x}h(t)dt =0$ for all $x\in [0,1]$.  Indeed, $h(x_{0})=\int_{0}^{x_{0}}h(t)dt =0$ implies $x_{0}\in [0,1]\setminus \cup_{n\in \N}X_{n}$ as required for a strong Cantor realiser. 
  
\smallskip

Seventh, consider item \ref{full6} and note that $h$ as in \eqref{modi3} is regulated with only removable discontinuities.  
Now, the set $X=\cup_{n\in \N}X_{n}$ consists of the local strict maxima of $h$, i.e.\ item \ref{full6} yields a strong Cantor realiser. 
For the reversal, $\exists^{2}$ computes a functional $M$ such that $M(g, a, b)$ is a maximum of $g\in C([0,1])$ on $[a,b]\subset [0,1]$ (see \cite{kohlenbach2}*{\S3}), i.e.\ $(\forall y\in [a,b])(g(y)\leq g(M(g, a,b)))$.
Using the functional $M$, one readily shows that `$x$ is a strict local maximum of $g$' is decidable\footnote{If $g\in C([0,1])$, then $x\in [0,1]$ is a strict local maximum iff for some $\epsilon\in \Q^{+}$:
\begin{itemize}
\item $g(y) < g(x)$ whenever $|x-y] < \epsilon$ for any $q\in [0,1]\cap\Q $, and: 
\item $\sup_{y\in [a,b]}g(y) < g(x)$ whenever $x \not \in [a,b]$, $a,b\in \Q$ and $[a,b] \subset [x - \epsilon,x + \epsilon]$.
\end{itemize}
Note that $\mu^{2}$ readily yields $N\in \N$ such that $(\forall y\in B(x, \frac{1}{2^{N}}))( g(y)<g(x))$.\label{roofer}
} given $\exists^{2}$, for $g$ continuous on $[0,1]$.  
Now let $f:[0,1]\di \R$ be regulated and with only removable discontinuities.  
Use $\exists^{2}$ to define $\tilde{f}:[0,1]\di \R$ as follows: $\tilde{f}(x):= f(x+)$ for $x\in [0, 1)$ and $\tilde{f}(1)=f(1-)$.
By definition, $\tilde{f}$ is continuous on $[0,1]$, and $\exists^{2}$ computes a (continuous) modulus of continuity, which follows in the same way as for Baire space (see e.g.\ \cite{kohlenbach4}*{\S4}).
In case $f$ is discontinuous at $x\in [0,1]$, the latter point is a strict local maximum of $f$ if and only if $f(x)>f(x+)$ (or $f(x)>f(x-)$ in case $x=1$).  
Note that $\mu^{2}$ (together with a modulus of continuity for $\tilde{f}$) readily yields $N_{f, x}\in \N$ such that $(\forall y\in B(x, \frac{1}{2^{N_{f,x}}}))( f(y)<f(x))$, in case $x$ is a strict local maximum of $f$.
In case $f$ is continuous at $x\in [0,1]$, we can use $\exists^{2}$ to decide whether $x$ is a local strict maximum of $\tilde{f}$.  
By Footnote \ref{roofer}, $\mu^{2}$ again yields $N_{f, x}\in \N$ such that $(\forall y\in B(x, \frac{1}{2^{N}}))( \tilde{f}(y)<\tilde{f}(x)))$, in case $x$ is a strict local maximum of $\tilde{f}$.
Now consider the following set:
\[\textstyle
A_{n}:=\{x\in [0,1]:  \textup{$x$ is a strict local maximum of $\tilde{f}$ or $f$ with $n\geq N_{{f}, x}$}\}.
\]
Then $A_{n}$ has finitely elements as strict local maxima cannot be `too close'.  
Hence, a strong Cantor realiser yields $y \in [0,1]\setminus \cup_{n\in \N}A_{n}$, which is not a local maximum of $f$, i.e.\ item \ref{full6} follows. 

\smallskip

Next, item \ref{full26} implies \ref{full2} as $D=\Q$ is trivially (weakly or strongly) countable.
To derive item \ref{full26} from item \ref{full3}, consider $X_{n}$ as in \eqref{sameold} and assume $D=\cup_{n\in \N}D_{n}$ is dense, where $D_{n}$ is finite for all $n\in \N$. 
Note that $\mu^{2}$ can find $q\in [0,1]\cap D\cap \Q$ with $f(q+)\ne f(q) \vee f(q-)\ne f(q)$, if such rational exists.   
In case $f$ is continuous at all $q\in [0,1]\cap D\cap \Q$,  
 use a strong Cantor realiser to find $y\in [0,1]\setminus \big(   \cup_{n\in \N} Z_{n}  \big)$, where $Z_{n}=X_{n}\cup D_{n}$.  By definition, $y\in [0,1]\setminus D$ and $f$ is continuous at $y$.  

\smallskip

%
%
Finally, assume item \ref{full25} and note that Thomae's function from \eqref{thomae} is regulated as the left and right limits are $0$.  Hence, we obtain item \ref{full2}, as Thomae's function is continuous on $\R\setminus \Q$ and discontinuous on $\Q$.  To obtain item \ref{full25} from a strong Cantor realiser, let $f, g:[0,1]$ be regulated and consider the finite set $X_{n}$ as in \eqref{sameold}.
Let $Y_{n}$ be the same set for $g$. Then use a strong Cantor realiser to obtain $y\in [0,1]\setminus \big(\cup_{n\in \N}(X_{n}\cup Z_{n})\big)$.  By definition, $f$ and $g$ are continuous at $y$, i.e.\ item \ref{full25} follows.
%
\qed
\end{proof}
Regarding item \ref{full6}, one readily shows that the following functional is computationally equivalent to $\Omega$ from Theorem \ref{kifkif}.  
Note that the set of strict local maxima is countable for \emph{any} $\R\di \R$-function by \cite{saks}*{p.\ 261, Theorem (1.1)}.
\begin{itemize}
\item A functional that on input regulated $f:[0,1]\di \R$ with only removable discontinuities, outputs an enumeration of all strict local maxima. 
\end{itemize}
In conclusion, we have established a number of equivalences for strong Cantor realisers, while many more variations are possible.

%

\section{Some details of Kleene's computability theory}\label{appendisch}
Kleene's computability theory borrows heavily from type theory and higher-order arithmetic.  We briefly sketch some of the associated notions.  

\smallskip

First of all, Kleene's S1-S9 schemes define `$\Psi$ is computable in terms of $\Phi$' for objects $\Phi,\Psi$ of any finite type.  
Now, the collection of \emph{all finite types} $\mathbf{T}$ is defined by the following two clauses:
\begin{center}
(i) $0\in \mathbf{T}$   and   (ii)  If $\sigma, \tau\in \mathbf{T}$ then $( \sigma \di \tau) \in \mathbf{T}$,
\end{center}
where $0$ is the type of natural numbers, and $\sigma\di \tau$ is the type of mappings from objects of type $\sigma$ to objects of type $\tau$.
In this way, $1\equiv 0\di 0$ is the type of functions from numbers to numbers, and  $n+1\equiv n\di 0$.  
We view sets $X$ of type $\sigma$ objects as given by characteristic functions $F_{X}^{\sigma\di 0}$.  

\smallskip

Secondly, for variables $x^{\rho}, y^{\rho}, z^{\rho},\dots$ of any finite type $\rho\in \mathbf{T}$,   types may be omitted when they can be inferred from context.  
The constants include the type $0$ objects $0, 1$ and $ <_{0}, +_{0}, \times_{0},=_{0}$  which are intended to have their usual meaning as operations on $\N$.
Equality at higher types is defined in terms of `$=_{0}$' as follows: for any objects $x^{\tau}, y^{\tau}$, we have
\be\label{aparth}
[x=_{\tau}y] \equiv (\forall z_{1}^{\tau_{1}}\dots z_{k}^{\tau_{k}})[xz_{1}\dots z_{k}=_{0}yz_{1}\dots z_{k}],
\ee
if the type $\tau$ is composed as $\tau\equiv(\tau_{1}\di \dots\di \tau_{k}\di 0)$.  

\smallskip

Thirdly, we introduce the usual notations for common mathematical notions, like real numbers, as also can be found in \cite{kohlenbach2}.  
\begin{defi}[Real numbers and related notions]\label{keepintireal}\rm~
\begin{enumerate}
 \renewcommand{\theenumi}{\alph{enumi}}
\item Natural numbers correspond to type zero objects, and we use `$n^{0}$' and `$n\in \N$' interchangeably.  Rational numbers are defined as signed quotients of natural numbers, and `$q\in \Q$' and `$<_{\Q}$' have their usual meaning.    
\item Real numbers are coded by fast-converging Cauchy sequences $q_{(\cdot)}:\N\di \Q$, i.e.\  such that $(\forall n^{0}, i^{0})(|q_{n}-q_{n+i}|<_{\Q} \frac{1}{2^{n}})$.  
We use Kohlenbach's `hat function' from \cite{kohlenbach2}*{p.\ 289} to guarantee that every $q^{1}$ defines a real number.  
\item We write `$x\in \R$' to express that $x^{1}:=(q^{1}_{(\cdot)})$ represents a real as in the previous item and write $[x](k):=q_{k}$ for the $k$-th approximation of $x$.    
\item Two reals $x, y$ represented by $q_{(\cdot)}$ and $r_{(\cdot)}$ are \emph{equal}, denoted $x=_{\R}y$, if $(\forall n^{0})(|q_{n}-r_{n}|\leq {2^{-n+1}})$. Inequality `$<_{\R}$' is defined similarly.  
We sometimes omit the subscript `$\R$' if it is clear from context.           
\item Functions $F:\R\di \R$ are represented by $\Phi^{1\di 1}$ mapping equal reals to equal reals, i.e.\ extensionality as in $(\forall x , y\in \R)(x=_{\R}y\di \Phi(x)=_{\R}\Phi(y))$.\label{EXTEN}
\item The relation `$x\leq_{\tau}y$' is defined as in \eqref{aparth} but with `$\leq_{0}$' instead of `$=_{0}$'.  Binary sequences are denoted `$f^{1}, g^{1}\leq_{1}1$', but also `$f,g\in C$' or `$f, g\in 2^{\N}$'.  Elements of Baire space are given by $f^{1}, g^{1}$, but also denoted `$f, g\in \N^{\N}$'.
\item Sets of type $\rho$ objects $X^{\rho\di 0}, Y^{\rho\di 0}, \dots$ are given by their characteristic functions $F^{\rho\di 0}_{X}\leq_{\rho\di 0}1$, i.e.\ we write `$x\in X$' for $ F_{X}(x)=_{0}1$. \label{koer} 
\end{enumerate}
\end{defi}
For completeness, we list the following notational convention for finite sequences.  
\begin{nota}[Finite sequences]\label{skim}\rm
The type for `finite sequences of objects of type $\rho$' is denoted $\rho^{*}$, which we shall only use for $\rho=0,1$.  
We shall not always distinguish between $0$ and $0^{*}$. 
Similarly, we assume a fixed coding for finite sequences of type $1$ and shall make use of the type `$1^{*}$'.  
In general, we do not always distinguish between `$s^{\rho}$' and `$\langle s^{\rho}\rangle$', where the former is `the object $s$ of type $\rho$', and the latter is `the sequence of type $\rho^{*}$ with only element $s^{\rho}$'.  The empty sequence for the type $\rho^{*}$ is denoted by `$\langle \rangle_{\rho}$', usually with the typing omitted.  

\smallskip

Furthermore, we denote by `$|s|=n$' the length of the finite sequence $s^{\rho^{*}}=\langle s_{0}^{\rho},s_{1}^{\rho},\dots,s_{n-1}^{\rho}\rangle$, where $|\langle\rangle|=0$, i.e.\ the empty sequence has length zero.
For sequences $s^{\rho^{*}}, t^{\rho^{*}}$, we denote by `$s*t$' the concatenation of $s$ and $t$, i.e.\ $(s*t)(i)=s(i)$ for $i<|s|$ and $(s*t)(j)=t(|s|-j)$ for $|s|\leq j< |s|+|t|$. For a sequence $s^{\rho^{*}}$, we define $\overline{s}N:=\langle s(0), s(1), \dots,  s(N-1)\rangle $ for $N^{0}<|s|$.  
For a sequence $\alpha^{0\di \rho}$, we also write $\overline{\alpha}N=\langle \alpha(0), \alpha(1),\dots, \alpha(N-1)\rangle$ for \emph{any} $N^{0}$.  By way of shorthand, 
$(\forall q^{\rho}\in Q^{\rho^{*}})A(q)$ abbreviates $(\forall i^{0}<|Q|)A(Q(i))$, which is (equivalent to) quantifier-free if $A$ is.   
\end{nota}

\begin{ack}\rm
We thank Anil Nerode for his valuable advice.
Our research was supported by the \emph{Deutsche Forschungsgemeinschaft} via the DFG grant SA3418/1-1.
Initial results were obtained during the stimulating MFO workshop (ID 2046) on proof theory and constructive mathematics in Oberwolfach in early Nov.\ 2020.  
We express our gratitude towards the aforementioned institutions.    
\end{ack}

\section{Bibliography}
\begin{biblist}
\bib{voordedorst}{book}{
  author={Appell, J\"{u}rgen},
  author={Bana\'{s}, J\'{o}zef},
  author={Merentes, Nelson},
  title={Bounded variation and around},
  series={De Gruyter Series in Nonlinear Analysis and Applications},
  volume={17},
  publisher={De Gruyter, Berlin},
  date={2014},
  pages={x+476},
}

\bib{avi2}{article}{
  author={Avigad, Jeremy},
  author={Feferman, Solomon},
  title={G\"odel's functional \(``Dialectica''\) interpretation},
  conference={ title={Handbook of proof theory}, },
  book={ series={Stud. Logic Found. Math.}, volume={137}, },
  date={1998},
  pages={337--405},
}

\bib{boerbakies}{book}{
  author={Bourbaki, Nicolas},
  title={\'{E}lements de math\'{e}matique, Livre IV: Fonctions d'une variable r\'{e}elle. (Th\'{e}orie \'{e}l\'{e}mentaire)},
  language={French},
  series={Actualit\'{e}s Sci. Ind., no. 1132},
  publisher={Hermann et Cie., Paris},
  date={1951},
  pages={ii+200},
}

\bib{boereng}{book}{
  author={Bourbaki, Nicolas},
  title={Functions of a real variable},
  series={Elements of Mathematics},
  publisher={Springer},
  date={2004},
  pages={xiv+338},
}

\bib{boekskeopendoen}{book}{
  author={Buchholz, Wilfried},
  author={Feferman, Solomon},
  author={Pohlers, Wolfram},
  author={Sieg, Wilfried},
  title={Iterated inductive definitions and subsystems of analysis},
  series={LNM 897},
  publisher={Springer},
  date={1981},
  pages={v+383},
}

\bib{cantor1}{article}{
  author={Cantor, Georg},
  title={Ueber eine Eigenschaft des Inbegriffs aller reellen algebraischen Zahlen},
  journal={J. Reine Angew. Math.},
  volume={77},
  date={1874},
  pages={258--262},
}

\bib{gaud}{article}{
  author={Gauld, David},
  title={Did the Young Volterra Know about Cantor?},
  journal={Math. Mag.},
  volume={66},
  date={1993},
  number={4},
  pages={246--247},
}

\bib{grayk}{article}{
  author={Gray, Robert},
  title={Georg Cantor and transcendental numbers},
  journal={Amer. Math. Monthly},
  volume={101},
  date={1994},
  number={9},
  pages={819--832},
}

\bib{hilbertendlich}{article}{
  author={Hilbert, David},
  title={\"{U}ber das Unendliche},
  language={German},
  journal={Math. Ann.},
  volume={95},
  date={1926},
  number={1},
  pages={161--190},
}

\bib{hrbacekjech}{book}{
  author={Hrbacek, Karel},
  author={Jech, Thomas},
  title={Introduction to set theory},
  series={Monographs and Textbooks in Pure and Applied Mathematics},
  volume={220},
  edition={3},
  publisher={Marcel Dekker, Inc., New York},
  date={1999},
  pages={xii+291},
}

\bib{kleeneS1S9}{article}{
  author={Kleene, Stephen C.},
  title={Recursive functionals and quantifiers of finite types. I},
  journal={Trans. Amer. Math. Soc.},
  volume={91},
  date={1959},
  pages={1--52},
}

\bib{kohlenbach4}{article}{
  author={Kohlenbach, Ulrich},
  title={Foundational and mathematical uses of higher types},
  conference={ title={Reflections on the foundations of mathematics}, },
  book={ series={Lect. Notes Log.}, volume={15}, publisher={ASL}, },
  date={2002},
  pages={92--116},
}

\bib{kohlenbach2}{article}{
  author={Kohlenbach, Ulrich},
  title={Higher order reverse mathematics},
  conference={ title={Reverse mathematics 2001}, },
  book={ series={Lect. Notes Log.}, volume={21}, publisher={ASL}, },
  date={2005},
  pages={281--295},
}

\bib{kunen}{book}{
  author={Kunen, Kenneth},
  title={Set theory},
  series={Studies in Logic},
  volume={34},
  publisher={College Publications, London},
  date={2011},
  pages={viii+401},
}

\bib{longmann}{book}{
  author={Longley, John},
  author={Normann, Dag},
  title={Higher-order Computability},
  year={2015},
  publisher={Springer},
  series={Theory and Applications of Computability},
}

\bib{demol}{book}{
  author={Moll, Victor H.},
  title={Numbers and functions},
  series={Student Mathematical Library},
  volume={65},
  publisher={American Mathematical Society},
  date={2012},
  pages={xxiv+504},
}

\bib{dagcomp20}{article}{
  author={Normann, Dag},
  title={Computability And Non-Monotone Induction},
  year={2020},
  journal={Submitted, arxiv: \url {https://arxiv.org/abs/2006.03389}},
  pages={pp.\ 41},
}

\bib{dagsam}{article}{
  author={Normann, Dag},
  author={Sanders, Sam},
  title={Nonstandard Analysis, Computability Theory, and their connections},
  journal={Journal of Symbolic Logic},
  volume={84},
  number={4},
  pages={1422--1465},
  date={2019},
}

\bib{dagsamII}{article}{
  author={Normann, Dag},
  author={Sanders, Sam},
  title={The strength of compactness in Computability Theory and Nonstandard Analysis},
  journal={Annals of Pure and Applied Logic, Article 102710},
  volume={170},
  number={11},
  date={2019},
}

\bib{dagsamIII}{article}{
   author={Normann, Dag},
   author={Sanders, Sam}
   title={On the mathematical and foundational significance of the uncountable},
   journal={Journal of Mathematical Logic, \url{https://doi.org/10.1142/S0219061319500016}},
   date={2019},
}

\bib{dagsamVI}{article}{
  author={Normann, Dag},
  author={Sanders, Sam},
  title={Representations in measure theory},
  journal={Submitted, arXiv: \url {https://arxiv.org/abs/1902.02756}},
  date={2019},
}

\bib{dagsamVII}{article}{
  author={Normann, Dag},
  author={Sanders, Sam},
  title={Open sets in Reverse Mathematics and Computability Theory},
  journal={Journal of Logic and Computation},
  volume={30},
  number={8},
  date={2020},
  pages={pp.\ 40},
}

\bib{dagsamV}{article}{
  author={Normann, Dag},
  author={Sanders, Sam},
  title={Pincherle's theorem in reverse mathematics and computability theory},
  journal={Ann. Pure Appl. Logic},
  volume={171},
  date={2020},
  number={5},
  pages={102788, 41},
}

\bib{dagsamIX}{article}{
  author={Normann, Dag},
  author={Sanders, Sam},
  title={The Axiom of Choice in Computability Theory and Reverse Mathematics},
  journal={Journal of Logic and Computation},
  volume={31},
  date={2021},
  number={1},
  pages={297-325},
}

\bib{dagsamX}{article}{
  author={Normann, Dag},
  author={Sanders, Sam},
  title={On the uncountability of $\mathbb {R}$},
  journal={To appear in the Journal of Symbolic Logic, arxiv: \url {https://arxiv.org/abs/2007.07560}},
  pages={pp.\ 41},
  date={2022},
}

\bib{dagsamXI}{article}{
  author={Normann, Dag},
  author={Sanders, Sam},
  title={On robust theorems due to Bolzano, Weierstrass, and Cantor in Reverse Mathematics},
  journal={Submitted, arxiv: \url {https://arxiv.org/abs/2102.04787}},
  pages={pp.\ 30},
  date={2021},
}

\bib{dagsamXII}{article}{
  author={Normann, Dag},
  author={Sanders, Sam},
  title={Betwixt Turing and Kleene},
  journal={LNCS 13137, proceedings of LFCS22},
  pages={pp.\ 18},
  date={2022},
}

\bib{dagsamXIII}{article}{
  author={Normann, Dag},
  author={Sanders, Sam},
  title={On the computational properties of basic mathematical notions},
  journal={Submitted, arxiv: \url {https://arxiv.org/abs/2203.05250}},
  pages={pp.\ 43},
  date={2022},
}

\bib{riehabi}{book}{
  author={Riemann, Bernhard},
  title={Ueber die Darstellbarkeit einer Function durch eine trigonometrische Reihe},
  publisher={Abhandlungen der K\"oniglichen Gesellschaft der Wissenschaften zu G\"ottingen, Volume 13},
  note={Habilitation thesis defended in 1854, published in 1867, pp.\ 47},
}

\bib{nudyrudy}{book}{
  author={Rudin, Walter},
  title={Real and complex analysis},
  edition={3},
  publisher={McGraw-Hill},
  date={1987},
  pages={xiv+416},
}

\bib{saks}{book}{
  author={Saks, Stanis\l aw},
  title={Theory of the integral},
  publisher={Dover Publications, Inc., New York},
  date={1964},
  pages={xv+343},
}

\bib{scheeffer}{article}{
  author={Scheeffer, Ludwig},
  title={Allgemeine Untersuchungen \"{u}ber Rectification der Curven},
  language={German},
  journal={Acta Math.},
  volume={5},
  date={1884},
  number={1},
  pages={49--82},
}

\bib{volterraplus}{article}{
  author={Silva, Cesar E.},
  author={Wu, Yuxin},
  title={No Functions Continuous Only At Points In A Countable Dense Set},
  journal={Preprint, arxiv: \url {https://arxiv.org/abs/1809.06453v3}},
  date={2018},
}

\bib{simpson2}{book}{
  author={Simpson, Stephen G.},
  title={Subsystems of second order arithmetic},
  series={Perspectives in Logic},
  edition={2},
  publisher={CUP},
  date={2009},
  pages={xvi+444},
}

\bib{zweer}{book}{
  author={Soare, Robert I.},
  title={Recursively enumerable sets and degrees},
  series={Perspectives in Mathematical Logic},
  publisher={Springer},
  date={1987},
  pages={xviii+437},
}

\bib{thomeke}{book}{
  author={Thomae, Carl J.T.},
  title={Einleitung in die Theorie der bestimmten Integrale},
  publisher={Halle a.S. : Louis Nebert},
  date={1875},
  pages={pp.\ 48},
}

\bib{vadsiger}{book}{
  author={Vatsa, B.S.},
  author={Vatsa, Suchi},
  title={Discrete Mathematics (4th revised edition)},
  publisher={New Age International},
  date={2009},
  pages={314},
}

\bib{volaarde2}{article}{
  author={Volterra, Vito},
  title={Alcune osservasioni sulle funzioni punteggiate discontinue},
  journal={Giornale di matematiche},
  volume={XIX},
  date={1881},
  pages={76-86},
}

\bib{wica}{article}{
  author={Wikipedia contributors},
  journal={Wikipedia, The Free Encyclopedia, \url {https://en.wikipedia.org/wiki/Cantor\%27s_first_set_theory_article}},
  title={Cantor's first set theory article},
  year={2020},
}

\end{biblist}
\bye